\documentclass{amsart}
\usepackage{url}
\usepackage{algorithm2e}
\usepackage{algorithmic}
\usepackage{hyperref}
\hypersetup{colorlinks,%
citecolor=blue,%
linkcolor=blue,%
urlcolor=blue,%
%pdftex
}

\def \C {\mathbb{C}}
\def \Q {\mathbb{Q}}
\def \ni {\noindent}

\def \ms {\medskip}
\def \mni {\ms\ni}

%%%%%%%%%%% Mathematical abreviations
\def \I {\mathcal{I}}
\def \A {\mathcal{A}}
\def \X {\mathcal{X}}
\def \Y {\mathcal{Y}}

\def \S {\mathcal{S}}

\def \K {\mathcal{K}}
\def \P {\mathcal{P}}
\def \ci {\mathcal{I}}
\def \cj {\mathcal{J}}

\def \exp {{\rm exp}}
\def \deg {{\rm deg}}

\def \jac {{\rm Jac}}

%%%%%%%%%

\def\Maple{{\sffamily\small Maple\/ }}

\newtheorem{theorem}{Theorem}[section]
\newtheorem{lemma}[theorem]{Lemma}

\newtheorem{definition}[theorem]{Definition}

\newtheorem{proposition}[theorem]{Proposition}

\numberwithin{equation}{section}

\begin{document}

\title[Locally nilpotent derivations in dimension three]
{Characterization of rank two locally nilpotent derivations in
dimension three}

\author[M. A. Barkatou]{Moulay A. Barkatou}
\address{Laboratoire XLIM, UMR 6172 CNRS-Universit\'e de
Limoges\\
Avenue Albert-Thomas 123, 87060, Limoges Cedex, France}
\email{moulay.barkatou@unilim.fr}

\author[H. El Houari]{Hassan El Houari}
\address{Department of Mathematics\\Faculty of Sciences Semlalia\\
Cadi Ayyad University\\P.O Box 2390, Marrakech\\Morocco}
\email{h.elhouari@ucam.ac.ma}

\author[M. El Kahoui]{M'hammed El Kahoui}
\address{Department of Mathematics\\Faculty of Sciences Semlalia\\
Cadi Ayyad University\\P.O Box 2390, Marrakech\\Morocco}
\email{elkahoui@ucam.ac.ma}

\date{}

\dedicatory{}

\keywords{Locally nilpotent derivation, Plinth ideal, Coordinate,
Functional decomposition.}

\subjclass[2000]{14R10, 13P10}

\begin{abstract}In this paper we give an algorithmic
characterization of rank two locally nilpotent derivations in
dimension three. Together with an algorithm for computing the
plinth ideal, this gives a method for computing the rank of a
locally nilpotent derivation in dimension three.
\end{abstract}

\maketitle

%\tableofcontents

\section{Introduction}\label{sec:intro}
Let $\K$ be a commutative field of characteristic zero, $\K^{[n]}$
be the ring of polynomials in $n$ variables with coefficients in
$\K$ and $Aut_{\K}(\K^{[n]})$ be the group of $\K$-automorphisms
of $\K^{[n]}$. The structure of $Aut_{\K}(\K^{[2]})$ is well
understood \cite{kulk53}. However, $Aut_{\K}(\K^{[n]})$ remains a
big mystery for $n\geq 3$.

In order to understand the nature of $Aut_{\K}(\K^{[n]})$ it is
natural to investigate algebraic group actions on the affine
$n$-space over $\K$. Actions of the algebraic group $(\K,+)$ on
affine spaces are commonly called {\it algebraic $G_a$-actions},
and they are all of the form $\exp(t\X)_{t\in \K}$ where $\X$ is a
locally nilpotent $\K$-derivation of the polynomial ring
$\K^{[n]}$.

Locally nilpotent derivations of $\K^{[2]}$ are completely
classified, and this classification is algorithmic
\cite{rentschler68}. In dimension three, D. Daigle, G. Freudenburg
and S. Kaliman obtained several deep results which constitute a
big step towards a classification, see \cite{Freudenburg_book} and
the references therein. However, some of these results which are
obtained by using topological methods are not of algorithmic
nature. It would of course be very nice to obtain an algorithmic
classification of locally nilpotent derivations in dimension
three, but this seems to be a difficult problem. This paper
addresses the less ambitious problem of computing some invariants,
namely the plinth ideal and the rank, of a locally nilpotent
derivation in dimension three.

The paper is structured as follows. In section \ref{sec:basics} we
set up notation and give the main results to be used. Section
\ref{sec:local_slice} concerns minimal local slices and how to
compute them in dimension $3$. This gives an algorithm for
computing a generator of the plinth ideal of a locally nilpotent
derivation in dimension three. In section \ref{sec:rank} we give
an algorithm to compute the rank of a locally nilpotent
$\K$-derivation in dimension $3$.

\section{Notation and basic facts}\label{sec:basics}
Throughout this paper $\K$ is a commutative field of
characteristic zero, all the considered rings are commutative of
characteristic zero with unit and all the considered derivations
are nonzero. A derivation of a $\K$-algebra $\A$ is called a
$\K$-derivation if it satisfies $\X(a)=0$ for any $a\in \K$.

\subsection{Classical results on locally nilpotent derivations}
A derivation $\X$ of a ring $\A$ is called {\it locally nilpotent}
if for any $a\in\A$ there exists a positive integer $n$ such that
$\X^n(a)=0$. An element $s$ of $\A$ satisfying $\X(s)\neq 0$ and
$\X^2(s)=0$ is called a {\it local slice} of $\X$. If moreover
$\X(s)=1$ then $s$ is called a {\it slice} of $\X$. A nonzero
locally nilpotent derivation needs not to have a slice but always
has a local slice.

The following result, which dates back at least to
\cite{wright81a}, concerns locally nilpotent derivations having a
slice.
\begin{lemma}\label{wright}Let $\A$ be a ring containing $\Q$
and $\X$ be a locally nilpotent derivation of $\A$ having a slice
$s$. Then $\A=\A^{\X}[s]$ and $\X={\partial}_s$.
\end{lemma}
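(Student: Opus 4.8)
The plan is to construct the decomposition $\A=\A^{\X}[s]$ explicitly by induction on the nilpotency degree, after first recording how $\X$ acts on powers of the slice. The starting observation is the elementary identity $\X^{j}(s^{m})=\frac{m!}{(m-j)!}\,s^{m-j}$ for $0\le j\le m$, and $\X^{j}(s^{m})=0$ for $j>m$; this follows by an easy induction from $\X(s)=1$, $\X^{2}(s)=0$ and the Leibniz rule, so $\X$ acts on the powers of $s$ exactly as $\partial/\partial t$ acts on the powers of $t$, and in particular $\X^{n}(s^{n})=n!$. For $f\in\A$ write $\deg_{\X}(f)$ for the least integer $n$ with $\X^{n+1}(f)=0$; this is finite because $\X$ is locally nilpotent, and $\deg_{\X}(f)=0$ precisely when $f\in\A^{\X}$.

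To prove $\A=\A^{\X}[s]$ I would induct on $d=\deg_{\X}(f)$. The case $d=0$ is the observation just made. If $d\ge 1$, set $c=\X^{d}(f)$; then $\X(c)=\X^{d+1}(f)=0$, so $c\in\A^{\X}$, and since $\Q\subseteq\A$ we may form $g=f-\tfrac1{d!}\,c\,s^{d}$. Applying $\X^{d}$ and using the Leibniz rule together with $\X(c)=0$ and the identity above, $\X^{d}(g)=\X^{d}(f)-\tfrac1{d!}\,c\,\X^{d}(s^{d})=c-c=0$, hence $\deg_{\X}(g)\le d-1$. By the induction hypothesis $g\in\A^{\X}[s]$, and therefore $f=g+\tfrac1{d!}\,c\,s^{d}\in\A^{\X}[s]$.

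It remains to see that $s$ is transcendental over $\A^{\X}$, so that $\A^{\X}[s]$ is a genuine polynomial ring and $\partial_{s}$ is meaningful. Suppose $\sum_{i=0}^{d}a_{i}s^{i}=0$ is a nontrivial relation with $a_{i}\in\A^{\X}$, $a_{d}\neq 0$, and $d$ chosen minimal; necessarily $d\ge 1$. Applying $\X$ and using $\X(a_{i})=0$ and $\X(s^{i})=is^{i-1}$ gives $\sum_{i=1}^{d}i\,a_{i}s^{i-1}=0$, a relation of degree $d-1$ whose leading coefficient $d\,a_{d}$ is nonzero because $d$ is a unit of $\A$; this contradicts minimality. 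Hence $\A=\A^{\X}[s]$ with $s$ transcendental over $\A^{\X}$, and for $f=\sum_{i}a_{i}s^{i}$ with $a_{i}\in\A^{\X}$ one reads off $\X(f)=\sum_{i}i\,a_{i}s^{i-1}=\partial_{s}(f)$, i.e. $\X=\partial_{s}$.

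The argument is largely bookkeeping, and the only steps needing care are the action of $\X$ on powers of $s$ and the resulting cancellation $\X^{d}(g)=0$ in the inductive step, together with the repeated use of $\Q\subseteq\A$ to divide by $d!$ and by $d$. Conceptually the same facts can be packaged through the Dixmier map $\pi(f)=\sum_{n\ge 0}\frac{(-1)^{n}}{n!}\,\X^{n}(f)\,s^{n}$, which, using $\X(s)=1$, is a $\K$-algebra retraction of $\A$ onto $\A^{\X}$ that kills $s$; I would mention this as an alternative route but the induction above is shorter and self-contained.
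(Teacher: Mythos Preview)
Your proof is correct. Note, however, that the paper does not actually prove this lemma: it is stated as a classical fact and attributed to \cite{wright81a}, so there is no ``paper's own proof'' to compare against.

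That said, your argument is one of the two standard ones. The induction on $\deg_{\X}(f)$ that peels off the top coefficient $c=\X^{d}(f)$ is exactly the Taylor-expansion proof; the Dixmier map $\pi$ you mention at the end is the other standard packaging, and indeed yields the explicit formula $f=\sum_{n\ge 0}\frac{1}{n!}\pi(\X^{n}(f))\,s^{n}$ in one stroke. Either route is acceptable and both appear in the references the paper cites (e.g.\ \cite{essen_book,Freudenburg_book}). Your verification that the powers of $s$ are $\A^{\X}$-linearly independent is the right thing to check in order to make $\partial_{s}$ well defined, since the paper does not assume $\A$ is a domain; the minimality argument you give is clean and uses $\Q\subseteq\A$ in the correct place.
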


Let $\A$ be a ring and $\X$ be a derivation of $\A$. The subset
$\{a\in \A;\;\X(a)=0\}$ of $\A$ is in fact a subring called the
{\it ring of constants} of $\X$ and is denoted by $\A^{\X}$. When
$\A$ is a domain and $\X$ is locally nilpotent, the ring of
constants $\A^{\X}$ is factorially closed in $\A$, i.e., if $a\in
\A^{\X}$ and $a=bc$ then $b,c\in \A^{\X}$. Also, the fact that
$\A$ is of characteristic zero implies that $\A^{\star}\subset
\A^{\X}$.

Locally nilpotent derivations in two variables over fields of
characteristic $0$ are well understood. We have in particular the
following version of Rentschler's theorem \cite{rentschler68}.

\begin{theorem}\label{rentshcler}Let $\X$ be a locally nilpotent
$\K$-derivation of $\K[x,y]$. Then there exist two polynomials
$f,g$ of $\K[x,y]$ and a univariate polynomial $h$ such that
$\K[f,g]=\K[x,y]$, $\K[x,y]^{\X}=\K[f]$ and $\X=h(f){\partial}_g$.
\end{theorem}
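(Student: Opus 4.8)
The plan is to split the statement into one hard classical input about the ring of constants and a formal localization argument that produces the explicit normal form $\X=h(f)\partial_g$.

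\emph{Step 1 (the main obstacle).} First I would establish that $\K[x,y]^{\X}=\K[f]$ for some polynomial $f$ that is a \emph{coordinate} of $\K[x,y]$, i.e.\ $\K[f,g_0]=\K[x,y]$ for a suitable $g_0$. The elementary part of this is that $\K[x,y]^{\X}$ has transcendence degree one over $\K$, is factorially closed in $\K[x,y]$ by the facts recalled in Section~\ref{sec:basics} (hence integrally closed and a unique factorization domain), and is finitely generated over $\K$. What does \emph{not} follow from these generalities is that the generator can be taken to be a coordinate rather than an arbitrary polynomial such as $x^{2}+y^{3}$; this is the genuinely two--dimensional content, and here I would appeal to the classical form of Rentschler's theorem \cite{rentschler68}, which rests on the Jung--van der Kulk structure of $\aut_{\K}(\K[x,y])$ \cite{kulk53}. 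I expect this to be the only real difficulty.

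\emph{Step 2 (passing to a slice).} With $f$ and a completion $g_0$ in hand, I would localize: put $S=\K[f]\setminus\{0\}$ and $R=S^{-1}\K[x,y]$. Base change along $\K[f]\hookrightarrow\K(f)$ gives $R=\K(f)[g_0]$, a polynomial ring in one variable over the field $\K(f)$, to which $\X$ extends as a locally nilpotent $\K(f)$-derivation with $R^{\X}=\K(f)$. The plinth ideal $\X(R)\cap R^{\X}$ is an ideal of the field $\K(f)$, and it is nonzero because any local slice of $\X$ in $R$ produces a nonzero element of it; hence it equals $\K(f)$ and $\X$ has a slice $s\in R$. By Lemma~\ref{wright}, $R=\K(f)[s]$ and $\X=\partial_{s}$ on $R$. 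Now comparing the two presentations $R=\K(f)[g_0]=\K(f)[s]$ and using that the $\K(f)$-automorphisms of $\K(f)[s]$ are the affine maps $s\mapsto us+r$, I get $g_0=cs+d$ with $c,d\in\K(f)$ and $c\neq 0$, so that $\X(g_0)=c\in\K(f)$. But $\X(g_0)$ also lies in $\K[x,y]$, and $\K[x,y]\cap\K(f)=\K[f]$ since $\K[x,y]=\K[f][g_0]$ is free over $\K[f]$; therefore $a:=\X(g_0)$ is a nonzero element of $\K[f]$.

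\emph{Step 3 (conclusion).} Writing an arbitrary $u\in\K[x,y]=\K[f][g_0]$ as $u=\sum_{j}u_{j}(f)\,g_0^{\,j}$ with $u_{j}\in\K[f]$, a one-line computation gives $\X(u)=a(f)\sum_{j}j\,u_{j}(f)\,g_0^{\,j-1}=a(f)\,\partial_{g_0}(u)$, so $\X=a(f)\,\partial_{g_0}$ as derivations of $\K[x,y]$. Taking $g:=g_0$ and $h:=a$ then yields all three assertions: $\K[f,g]=\K[x,y]$, $\K[x,y]^{\X}=\K[f]$, and $\X=h(f)\,\partial_{g}$. As a check, since $\partial_{g_0}$ is surjective in characteristic zero one also gets $\X(\K[x,y])=a(f)\,\K[x,y]$, whence the plinth ideal of $\X$ on $\K[x,y]$ equals $a(f)\K[f]$; this shows $h$ is determined up to a nonzero scalar and is independent of the chosen completion $g_0$, which is reassuring given the algorithmic aim of the paper.
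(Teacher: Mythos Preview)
The paper does not actually prove Theorem~\ref{rentshcler}; it is quoted as a classical background result with a reference to \cite{rentschler68}, so there is no ``paper's own proof'' to compare against.

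Your Steps~2 and~3 are correct: once one knows that $\K[x,y]^{\X}=\K[f]$ with $f$ a coordinate and $\K[x,y]=\K[f,g_0]$, the localization argument and the appeal to Lemma~\ref{wright} cleanly produce $\X(g_0)\in\K[f]\setminus\{0\}$ and hence $\X=h(f)\partial_{g_0}$. (A slightly shorter alternative: view $\X$ as a $\K[f]$-derivation of $\K[f][g_0]$ and note that if $\X(g_0)$ had positive degree in $g_0$ then the $g_0$-degree of $\X^{n}(g_0)$ would not tend to $-\infty$, contradicting local nilpotency; this gives $\X(g_0)\in\K[f]$ without passing to $\K(f)$.)

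The only point to flag is that your Step~1 is, as you yourself acknowledge, an appeal to the classical Rentschler theorem, which is essentially the statement you are asked to prove. So your write-up is best read as a derivation of the explicit normal form $\X=h(f)\partial_g$ from the qualitative version (``the kernel is $\K[f]$ for a coordinate $f$''), rather than an independent proof. That is a perfectly reasonable stance given that the paper itself treats the theorem as imported from the literature.
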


As a consequence of theorem \ref{rentshcler}, if $\A$ is a UFD
containing $\Q$ and $\X$ is a locally nilpotent $\A$-derivation of
$\A[x,y]$ then there exists $f\in \A[x,y]$ and a univariate
polynomial $h$ such that $\A[x,y]^{\X}=\A[f]$ and
$\X=h(f)({\partial}_yf{\partial}_x-{\partial}_xf{\partial}_y)$,
see \cite{daigle98a,elkahoui2004a}.

In the case of $\K^{[3]}$ we have the following result proved by
Miyanishi \cite{Miyanishi85a} for the case $\K=\C$ and may be
extended to the general case in a straightforward way by using
Kambayashi's transfer principle \cite{kambayashi75a}, see also
\cite{makar_limanov2005} for an algebraic proof.
\begin{theorem}\label{miyanishi}Let $\X$ be a locally nilpotent
$\K$-derivation of $\K[x,y,z]$. Then there exist polynomials $f,g$
such that $\K[x,y,z]^{\X}=\K[f,g]$.
\end{theorem}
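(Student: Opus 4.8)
The plan is to reduce the statement to the base field $\K=\C$, where it is Miyanishi's theorem, and then to transfer it to an arbitrary field of characteristic zero. For the reduction one extends $\X$ to a locally nilpotent derivation $\X_{\C}$ of $\C[x,y,z]$; since $\K[x,y,z]^{\X}=\C[x,y,z]^{\X_{\C}}\cap\K[x,y,z]$, Kambayashi's transfer principle \cite{kambayashi75a} lets one descend a polynomial presentation $\C[x,y,z]^{\X_{\C}}=\C[f,g]$ to a presentation $\K[x,y,z]^{\X}=\K[f,g]$ with $f,g\in\K[x,y,z]$. So from now on I take $\K=\C$ and set $B=\C[x,y,z]^{\X}$ and $S=\mathrm{Spec}\,B$.

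First I would record the properties of $B$ that do not use surface theory. As $\X\neq 0$ is locally nilpotent, $\mathrm{trdeg}_{\C}B=2$. As $B$ is factorially closed in the UFD $\C[x,y,z]$, it is itself a UFD, hence normal, with $B^{*}=\C^{*}$ and $\mathrm{Pic}(S)=0$; the same factorial closedness gives $B=\C[x,y,z]\cap\mathrm{Frac}(B)$, so Zariski's finiteness theorem shows that $B$ is a finitely generated $\C$-algebra and $S$ a genuine affine surface. Next comes the key localization. Choose a local slice $s$ of $\X$ and put $p=\X(s)\in B\setminus\{0\}$; since $\X(p)=0$, the derivation extends to $\C[x,y,z][1/p]$ with $s/p$ as a slice, so Lemma \ref{wright} gives $\C[x,y,z][1/p]=B[1/p][s/p]$ and $(\C[x,y,z][1/p])^{\X}=B[1/p]$. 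As $s/p$ is transcendental over $B[1/p]$, the ring $\C[x,y,z][1/p]$ is a polynomial ring in one variable over $B[1/p]$; geometrically the principal open set $\{p\neq 0\}\subseteq\mathbb{A}^{3}$ is isomorphic to $\mathbb{A}^{1}\times\mathrm{Spec}\,B[1/p]$. Hence $\mathrm{Spec}\,B[1/p]$ is smooth, so $S$ is smooth away from the curve $V(p)$, and $\mathrm{Frac}(B)$ is stably rational — because $\mathrm{Frac}(B)(s/p)=\C(x,y,z)$ — hence rational, since stably rational surfaces over $\C$ are rational.

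It remains to identify $S$ with $\mathbb{A}^{2}$. Following Miyanishi, I would show that $S$ is a smooth rational affine surface with $\Gamma(S,\mathcal{O}_S)^{*}=\C^{*}$, $\mathrm{Pic}(S)=0$, and logarithmic Kodaira dimension $\bar\kappa(S)=-\infty$. Most of this is already in hand; what is left is to promote smoothness away from $V(p)$ to global smoothness — $S$, being normal, has only finitely many singular points, all on $V(p)$, and a closer look at the morphism $\mathbb{A}^{3}\to S$ and at the $G_a$-action near their fibres rules them out — and to establish $\bar\kappa(S)=-\infty$ from the $\mathbb{A}^{1}$-ruled structure provided by the localization. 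Granting these, the Fujita–Miyanishi–Sugie characterization of the affine plane applies: a smooth affine surface over $\C$ with trivial units, trivial Picard group and $\bar\kappa=-\infty$ is isomorphic to $\mathbb{A}^{2}_{\C}$. Therefore $S\cong\mathbb{A}^{2}_{\C}$, that is $B\cong\C^{[2]}$, and transferring back yields $\K[x,y,z]^{\X}=\K[f,g]$.

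The main obstacle is clearly the pair of claims in the middle of the last paragraph: global smoothness of $S$ and $\bar\kappa(S)=-\infty$, and then the appeal to the classification of open algebraic surfaces. These are the genuinely geometric — originally topological — ingredients of the argument, and they are precisely why Miyanishi's theorem, unlike the two-variable Rentschler theorem \ref{rentshcler}, carries no effective procedure. As an alternative one could follow the purely algebraic proof of Makar-Limanov \cite{makar_limanov2005}, which avoids surface classification at the price of a different technicality; in either route, the local-slice localization of the second paragraph is the ingredient that will be reused in the algorithmic results of the later sections.
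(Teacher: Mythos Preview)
The paper does not prove this theorem at all: it is stated as a known result, attributed to Miyanishi for $\K=\C$ and to Kambayashi's transfer principle for the passage to an arbitrary characteristic-zero field, with the Crachiola--Makar-Limanov paper mentioned as an algebraic alternative. Your sketch follows exactly that citation trail, and your closing paragraph correctly identifies the smoothness of $S$ and $\bar\kappa(S)=-\infty$ (feeding into the Fujita--Miyanishi--Sugie characterization of $\mathbb{A}^2$) as the non-elementary core; this matches the paper's own remark that the existing proofs are not of algorithmic nature.

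One genuine slip in your reduction step: you ``extend $\X$ to $\X_{\C}$'' as though $\K\subseteq\C$, but an arbitrary field of characteristic zero need not embed in $\C$. The standard repair is either to first descend $\X$ to $\K_0[x,y,z]$ where $\K_0\subseteq\K$ is the subfield generated over $\Q$ by the finitely many coefficients of $\X(x),\X(y),\X(z)$ (this $\K_0$ does embed in $\C$), or to observe that Miyanishi's argument works over any algebraically closed field of characteristic zero and apply Kambayashi to the extension $\K\hookrightarrow\overline{\K}$. In either case the point you actually need is $\K[x,y,z]^{\X}\otimes_{\K}L=L[x,y,z]^{\X_L}$ for the relevant extension $L$, which then lets Kambayashi's principle descend the isomorphism with $L^{[2]}$ to one with $\K^{[2]}$; your formulation via the intersection $\C[x,y,z]^{\X_\C}\cap\K[x,y,z]$ is not quite how the transfer principle is applied.
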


Contrary to lemma \ref{wright} and theorem \ref{rentshcler} which
are of algorithmic nature, it is not clear from the existing
proofs of theorem \ref{miyanishi} how to compute, for a given
locally nilpotent derivation $\X$ of $\K[x,y,z]$, two polynomials
$f,g$ such that $\K[x,y,z]^{\X}=\K[f,g]$.

\subsection{Coordinates}
A polynomial $f\in \K[x_1,\ldots,x_n]$ is called a {\it
coordinate} if there exists a list of polynomials $f_1,\ldots,
f_{n-1}$ such that $\K[x_1,\ldots, x_n]=\K[f,f_1,\ldots,
f_{n-1}]$. In the same way a list $f_1,\ldots, f_r$ of
polynomials, with $r\leq n$, is called a {\it system of
coordinates} if there exists a list $f_{r+1},\ldots,f_n$ of
polynomials such that $\K[x_1,\ldots,x_n]=\K[f_1,\ldots,f_n]$. A
system of coordinates of length $n$ is called a {\it coordinate
system} of $\K[x_1,\ldots,x_n]$.

The famous Abhyankar-Moh theorem \cite{abhyankar-moh75a} states
that a polynomial $f$ in $\K[x,y]$ is a coordinate if and only if
$\K[x,y]/f$ is $\K$-isomorphic to $\K^{[1]}$. In the case of three
variables we have the following result proved by Kaliman in
\cite{kaliman2002a} for the case $\K=\C$ and extended to the
general case in \cite{daigle-kaliman2004a} by using Kambayashi's
transfer principle \cite{kambayashi75a}.
\begin{theorem}\label{kaliman-daigle}Let $f$ be a polynomial in
$\K[x,y,z]$ and assume that for all but finitely many $\alpha\in
\K$ the $\K$-algebra $\K[x,y,z]/(f-\alpha)$ is $\K$-isomorphic to
$\K^{[2]}$. Then $f$ is a coordinate of $\K[x,y,z]$.
\end{theorem}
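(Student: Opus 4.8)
Since Theorem \ref{kaliman-daigle} lies fairly deep, I would not attempt an elementary proof but would reconstruct the strategy behind \cite{kaliman2002a, daigle-kaliman2004a}. The plan has three stages: reduce to the case $\K=\C$; show that the hypothesis forces $f$ to define an $\mathbb{A}^2$-fibration of $\C^3$ over the affine line all of whose fibers are isomorphic to $\mathbb{A}^2$; and then conclude formally from the triviality of such fibrations.

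First I would reduce to $\K=\C$. The polynomial $f$, the finite exceptional set $S\subset\K$, and the isomorphisms $\K[x,y,z]/(f-\alpha)\cong\K^{[2]}$ for $\alpha\notin S$ involve only finitely many elements of $\K$, hence are all defined over a finitely generated subfield $\K_0\subset\K$, which we may embed into $\C$. By Kambayashi's transfer principle \cite{kambayashi75a} (compare the descent argument in \cite{daigle-kaliman2004a}), $f$ is a coordinate of $\K[x,y,z]$ if and only if it is a coordinate of $\C[x,y,z]$; moreover the hypothesis persists after the base change $\K_0\hookrightarrow\C$, since a cofinite set of closed fibers of $f$ remains isomorphic to $\mathbb{A}^2$. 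So from now on take $\K=\C$ and view $f$ as a morphism $f\colon\C^3\to\C$.

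The heart of the matter is to promote the information ``almost every closed fiber is $\C^2$'' to ``every scheme-theoretic fiber is $\mathbb{A}^2$ over its residue field (and in particular reduced)''. I would do this via the theory of affine surfaces over $\C$: a general fiber is $\C^2$, hence contractible, acyclic, and of logarithmic Kodaira dimension $-\infty$; using the topology of the contractible space $\C^3$ together with the (semicontinuity-type) behaviour of homology and of the logarithmic Kodaira dimension in the family $f$, one shows that the finitely many special fibers are reduced $\mathbb{Q}$-homology planes of logarithmic Kodaira dimension $-\infty$, and then the Miyanishi--Sugie / Gurjar--Miyanishi characterization identifies each of them with $\C^2$. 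Hence every fiber of $f$ is $\mathbb{A}^2$. This is the step I expect to be the main obstacle: it is precisely here that the argument relies on topological/analytic input rather than on a finite computation, which is why the theorem, unlike Lemma \ref{wright} and Theorem \ref{rentshcler}, is not of algorithmic nature.

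Finally, once $f\colon\C^3\to\C$ is known to be a flat morphism all of whose fibers (including the generic one) are $\mathbb{A}^2$, the conclusion is formal. Sathaye's theorem on $\mathbb{A}^2$-fibrations over discrete valuation rings of residue characteristic zero (see \cite{Freudenburg_book}) shows $f$ is Zariski-locally trivial over the line; then the Bass--Connell--Wright theorem on locally polynomial algebras, together with the freeness of finitely generated projective modules over $\C[t]$, upgrades this to global triviality, producing a $\C[t]$-algebra isomorphism $\C[x,y,z]\cong\C[t][u,v]$ that carries $t$ to $f$. Thus $f$ is a coordinate of $\C[x,y,z]$, and by the first stage a coordinate of $\K[x,y,z]$, as desired.
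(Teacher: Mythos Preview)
The paper does not give its own proof of Theorem~\ref{kaliman-daigle}; it is quoted as a background result, with the case $\K=\C$ attributed to Kaliman \cite{kaliman2002a} and the extension to arbitrary $\K$ to Daigle--Kaliman \cite{daigle-kaliman2004a} via Kambayashi's transfer principle \cite{kambayashi75a}. Your proposal is therefore not competing with any argument in the paper but is a faithful outline of exactly the sources the paper cites: reduction to $\C$ by a finitely-generated-subfield and transfer argument, Kaliman's topological analysis showing every fiber is $\C^2$, and the Sathaye plus Bass--Connell--Wright step to globalize. This matches both the content and the spirit of the paper's remark that the original proof is ``of topological nature'' and not algorithmic.
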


As a consequence of theorem \ref{kaliman-daigle}, if a polynomial
$f$ is such that $\K(f)[x,y,z]$ is $\K(f)$-isomorphic to
$\K(f)^{[2]}$ then $f$ is a coordinate
\cite{maubach2003a,elkahoui2005a}. This is the version of
Kaliman's theorem we will use in this paper. However, it is not
clear how to compute polynomials $g,h$ such that $f,g,h$ is a
coordinate system of $\K[x,y,z]$ since the original proof given in
\cite{kaliman2002a} is of topological nature.

\subsection{Rank of a derivation}
Let $\X$ be a $\K$-derivation of $\K[\underline{x}]=\K[x_1,\ldots,
x_n]$. As defined in \cite{freudenburg95a} the co-rank of $\X$,
denoted by $corank(\X)$, is the unique nonnegative integer $r$
such that $\K[\underline{x}]^{\X}$ contains a system of
coordinates  of length $r$ and no system of coordinates of length
greater than $r$. The rank of $\X$, denoted by $rank(\X)$, is
defined by $rank(\X)=n-corank(\X)$. Intuitively, the rank of $\X$
is the minimal number of partial derivatives needed to express
$\X$. The only one derivation of rank $0$ is the zero derivation.
Any $\K$-derivation of rank $1$ is of the form
$p(f_1,\ldots,f_{n-1}){\partial}_{f_n}$, where $f_1,\ldots,f_n$ is
a coordinate system. Such a derivation is locally nilpotent if and
only if $p$ does not depend on $f_n$.

Let $\X$ be a nonzero locally nilpotent $\K$-derivation of
$\K[x_1,\ldots,x_n]$, write $\X=\sum a_i{\partial}_{x_i}$ and set
$c=\gcd(a_1,\ldots,a_n)$. We say that $\X$ is irreducible if $c$
is a constant of $\K^{\star}$. It is well known that $\X(c)=0$ and
$\X=c\Y$ where $\Y$ is an irreducible locally nilpotent
$\K$-derivation. Moreover, this decomposition is unique up to a
unit, i.e., if $\X=c_1\Y_1$, where $\Y_1$ is irreducible, then
there exists an nonzero constant $\mu\in \K^{\star}$ such that
$c_1=\mu c$ and $\Y=\mu\Y_1$. Given any irreducible locally
nilpotent $\K$-derivation of $\K[x_1,\ldots, x_n]$ and any $c$
such that $\X(c)=0$, the derivations $\X$ and $c\X$ have the same
rank. Thus, for rank computation we may reduce, without loss of
generality, to irreducible derivations.

\subsection{The plinth ideal of a locally nilpotent derivation}

Let $\A$ be a ring, $\X$ be a locally nilpotent derivation of $\A$
and let
$$\S^{\X}:=\{\X(a)\; \vert\; \X^{2}(a)=0\}.$$ Clearly $\S^{\X}$ is an
ideal of $\A^{\X}$, called the {\it plinth ideal} of $\X$. In case
$\A=\K[x,y,z]$ we have the following result from
\cite{daigle-kaliman2004a} which is a consequence of faithful
flatness of $\K[x,y,z]$ as $\K[x,y,z]^{\X}$-module.
\begin{theorem}\label{bonnet}Let $\X$ be a locally nilpotent
$\K$-derivation of $\K[x,y,z]$. Then the ideal $\S^{\X}$ is
principal.
\end{theorem}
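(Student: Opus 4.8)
The plan is to exploit the fact that $\S^{\X}$ is an ideal of the constant ring $R := \K[x,y,z]^{\X}$, which by Theorem~\ref{miyanishi} is isomorphic to a polynomial ring $\K^{[2]}$ in two variables; in particular $R$ is a two-dimensional UFD. So it suffices to show that $\S^{\X}$ is a height one ideal of $R$, since a height one ideal in a UFD that is moreover divisorial (an intersection of height one primary components with no embedded primes) is principal — and more simply, it suffices to show $\S^{\X}$ is reflexive, or even just that it is an unmixed height one ideal. The cleanest route is to show directly that $\S^{\X}$ has pure height one and has no embedded associated primes, whence in the UFD $R$ it is principal.

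First I would recall the structural input: faithful flatness of $B:=\K[x,y,z]$ over $R$. This is the content cited from \cite{daigle-kaliman2004a}: since $B^{\X}=R$ and $\X$ is locally nilpotent with $B$ a finitely generated $\K$-algebra, $B$ is a flat (in fact faithfully flat) $R$-module. From faithful flatness one gets that for any ideal $I$ of $R$ one has $IB \cap R = I$, and that $R$ inherits good properties from $B$. The key point to extract is a description of $\S^{\X}$ as a contraction: if $s$ is any local slice of $\X$ then $\X(s)\in\S^{\X}$, and conversely every generator of $\S^{\X}$ arises this way. I would then use faithful flatness to compare $\S^{\X}$ with an ideal computed in $B$ or in a localization, showing $\S^{\X}B$ is a height one ideal of $B$; since $B$ is a UFD, $\S^{\X}B$ is contained in a principal ideal $pB$ with $p$ the gcd of its generators, and because $R$ is factorially closed in $B$ (as $R=B^{\X}$), $p\in R$. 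Contracting, $\S^{\X}\subseteq pR$, and one shows equality by checking that $\S^{\X}/pR$ would otherwise produce a local slice $s$ with $\X(s)$ properly dividing $p$ in $R$, contradicting minimality.

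More concretely, here is the argument I would try to make rigorous. Pick a nonzero $f\in R$ lying in $\S^{\X}$; then $f=\X(s)$ for a local slice $s$, and $B$ localizes to $R_f[s]$ with $\X=f\,\partial_s$ after inverting $f$, by Lemma~\ref{wright} applied over $R_f$. This shows $\mathrm{Frac}(B)=\mathrm{Frac}(R)(s)$ is a purely transcendental extension of degree one, so $\dim R = 2$. Now let $d\in R$ be a greatest common divisor (in the UFD $R$) of all elements of $\S^{\X}$; then $\S^{\X}\subseteq dR$, and I claim $\S^{\X}=dR$. Write an arbitrary element of $\S^{\X}$ as $d\cdot g$; the ideal generated by all such $g$ has gcd $1$, so it has height $\geq 2$, meaning $V(\S^{\X}/dR \text{ as ideal generated by the } g\text{'s})$ is a finite set of points $P_1,\dots,P_k$ of $\mathrm{Spec}\,R$. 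At each such height two prime the local slices can be recombined: using that $B$ is faithfully flat over $R$, the ideal $\S^{\X}$ cannot be supported only at codimension two, because $\S^{\X}B$ must define a subscheme of pure codimension one in $B$ (the non-flat-looking locus is governed by where $\X$ degenerates, which is a divisor). Contracting the divisorial ideal $\overline{\S^{\X}B}\cap R$ back to $R$ gives a principal ideal equal to $\S^{\X}$.

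The hard part will be the last step: controlling the codimension two behaviour, i.e., ruling out embedded or isolated height two components of $\S^{\X}$. Faithful flatness gives $\S^{\X}B\cap R=\S^{\X}$ but does not immediately give that $\S^{\X}B$ is divisorial in $B$; for that one needs to understand the associated primes of $\S^{\X}B$, which is where the geometry of the $G_a$-action enters. I expect the resolution to run through the fact that $\mathrm{Spec}\,B\to\mathrm{Spec}\,R$ is an $\mathbb{A}^1$-fibration (a consequence of Lemma~\ref{wright} over the generic point and of Theorem~\ref{miyanishi}), so that all fibres are one-dimensional and Cohen--Macaulayness of $B$ together with faithful flatness forces $R$ to be Cohen--Macaulay and the comparison of $\S^{\X}$ with its image in $B$ to respect depth; an ideal with a height one free resolution in a two-dimensional Cohen--Macaulay UFD that is unmixed of height one is principal. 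Establishing that unmixedness — that $\S^{\X}$ has no embedded primes — is the crux, and I would prove it by a local computation at each height two prime $P$ of $R$: after localizing at $P$, $B_P$ is flat over the two-dimensional regular local ring $R_P$, and one shows $s$ can be chosen so that $\X(s)$ generates $\S^{\X}R_P$, using that $R_P$ is a UFD (regular local) and that the plinth ideal localizes well.
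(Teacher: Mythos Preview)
The paper does not supply a proof of this theorem: it is quoted from \cite{daigle-kaliman2004a}, with only the remark that it ``is a consequence of faithful flatness of $\K[x,y,z]$ as $\K[x,y,z]^{\X}$-module.'' So there is no in-paper argument to compare against beyond that single hint.

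Your proposal is aimed in the direction of that hint --- you invoke faithful flatness of $B=\K[x,y,z]$ over $R=B^{\X}$, Miyanishi's theorem giving $R\cong\K^{[2]}$, and the observation that in a two-dimensional regular UFD an unmixed height-one ideal is principal. That reduction is sound. The genuine gap is precisely the one you yourself identify: you never establish that $\S^{\X}$ is unmixed of height one, equivalently that after dividing out the gcd $d\in R$ of its elements the residual ideal $J$ with $\S^{\X}=dJ$ equals $R$ rather than a proper height-two ideal. None of your suggested fixes closes this. The claim that ``$\S^{\X}B$ must define a subscheme of pure codimension one in $B$'' is exactly what is in question, and you give no argument for it; faithful flatness yields $\S^{\X}B\cap R=\S^{\X}$ but says nothing about the associated primes of the extension. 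The Cohen--Macaulay/depth remarks in your final paragraph do not pin anything down: you have produced no resolution of $\S^{\X}$, and images and kernels of maps between flat modules need not inherit flatness. And the proposed local computation at a height-two prime $P$ of $R$ --- ``one shows $s$ can be chosen so that $\X(s)$ generates $\S^{\X}R_P$'' --- is a restatement of the desired conclusion, not a proof of it.

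In short, you have correctly isolated the crux but not resolved it; the passage from faithful flatness to principality of $\S^{\X}$ is the substantive content of \cite{daigle-kaliman2004a}, and you should consult that paper for the actual argument rather than attempting to reconstruct it from general homological principles.
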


As we will see in the sequel, the ideal $\S^{\X}$ contains a
crucial information for computing the rank of a locally nilpotent
derivation in dimension $3$. Before going on to the details on how
to exploit this information for our purpose we first focus on how
to compute a generator of this ideal.

\section{Minimal local slices}\label{sec:local_slice}In this
section we give an algorithm to compute a generator of the ideal
$\S^{\X}$ in case $\X$ is a locally nilpotent $\K$-derivation of
$\K[x,y,z]$. Besides theorem \ref{bonnet}, our algorithm strongly
depends on the fact that $\K[x,y,z]^{\X}$ is generated by $2$
polynomials. Since we do not have at disposal an algorithmic
version of Miyanishi theorem we assume a generating system of
$\K[x,y,z]^{\X}$ to be available.

\begin{definition}Let $\A$ be a domain and $\mathcal{X}$ be a locally
nilpotent derivation of $\A$. A local slice $s$ of $\mathcal{X}$
is called minimal if for any local slice $v$ such that
$\mathcal{X}(v)\;\vert\; \mathcal{X}(s)$ we have
$\mathcal{X}(v)=\mu \mathcal{X}(s)$, where $\mu$ is a unit of
$\A$.
\end{definition}

\begin{lemma}\label{reduction_lemma}Let $\A$ be a domain and $\X$ be
a locally nilpotent derivation of $\A$. Let $s$ be a local slice
of $\X$, $p$ be a factor of $\X(s)=c$ and write $c=pc_1$. Then
there exists $s_1\in \A$ such that $\X(s_1)=c_1$ if and only if
the ideal $p\A$ contains an element of the form $s+a$ where $a\in
\A^{\X}$.
\end{lemma}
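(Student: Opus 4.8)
The plan is to prove both implications directly by elementary manipulation, the only structural input being the fact recalled in Section~\ref{sec:basics} that $\A^{\X}$ is factorially closed in $\A$ when $\A$ is a domain and $\X$ is locally nilpotent. Since $c=\X(s)$ satisfies $\X(c)=0$, this factorial closedness applied to the factorization $c=pc_1$ forces $p\in\A^{\X}$, and hence $\X(p)=0$; this is the observation that makes everything work. I also record at the outset that, $s$ being a local slice, we have $c\neq 0$, so $p\neq 0$ and cancellation of $p$ in the domain $\A$ is legitimate.

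For the implication ($\Leftarrow$), I would assume that $p\A$ contains an element $s+a$ with $a\in\A^{\X}$, say $s+a=pb$ for some $b\in\A$. Applying $\X$ and using $\X(a)=0$ together with $\X(p)=0$ gives $c=\X(s)=\X(s+a)=p\,\X(b)$. Comparing this with $c=pc_1$ and cancelling $p$ yields $\X(b)=c_1$, so $s_1:=b$ is the desired element.

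For the implication ($\Rightarrow$), I would start from an $s_1\in\A$ with $\X(s_1)=c_1$. Since $\X(p)=0$ we obtain $\X(ps_1)=p\,\X(s_1)=pc_1=c=\X(s)$, hence $\X(s-ps_1)=0$, i.e.\ $a:=ps_1-s$ lies in $\A^{\X}$. Then $s+a=ps_1\in p\A$, which is exactly the required statement.

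There is no genuine obstacle here once one notices that $p\in\A^{\X}$; the only points that need care—and that I would make sure to state explicitly—are the justification that $\X(p)=0$ (used in both directions) and that $p\neq 0$ so that it can be cancelled, both of which follow from $s$ being a local slice together with factorial closedness of $\A^{\X}$ in the domain $\A$.
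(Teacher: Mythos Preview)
Your proof is correct and follows exactly the same route as the paper: for ($\Rightarrow$) one sets $a:=ps_1-s$ and observes $\X(a)=0$, and for ($\Leftarrow$) one writes $s+a=ps_1$ and applies $\X$ to obtain $\X(s_1)=c_1$. The only difference is that you make explicit the facts $\X(p)=0$ (via factorial closedness of $\A^{\X}$) and $p\neq 0$, which the paper uses tacitly in computing $\X(ps_1-s)=0$ and in cancelling $p$.
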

\begin{proof}``$\Rightarrow$") Assume that there exists a local slice
$s_1$ of $\X$ such that $\X(s_1)=c_1$. Then $\X(ps_1-s)=0$ and so
$ps_1-s=a$ where $a$ is a constant of $\X$. This proves that $p\A$
contains $s+a$.

``$\Leftarrow$") Assume now that the ideal $p\A$ contains an
element of the form $s+a$, where $a$ is a constant of $\X$, and
write $s+a=ps_1$. Then $\X(s)=p\X(s_1)$ and so $\X(s_1)=c_1$.
\end{proof}

\begin{proposition}\label{local_slice}Let $\A$ be a UFD, $\X$ be a
locally nilpotent derivation of $\A$ and $s$ be a local slice of
$\X$. Then the following hold:

i) there exists a minimal local slice $s_0$ of $\X$ such that
$\X(s_0)\;\vert\; \X(s)$,

ii) in case $\S^{\X}$ is a principal ideal, it is generated by
$\X(s)$ for any minimal local slice $s$ of $\X$.
\end{proposition}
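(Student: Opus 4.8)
The plan is to prove (i) by a well-founded induction on the number of prime factors of $\X(s)$, and then deduce (ii) from (i) together with Lemma \ref{reduction_lemma}. For (i), set $c = \X(s)$. If $s$ is already minimal we are done. Otherwise, by the definition of minimality there is a local slice $v$ with $\X(v)\mid c$ but $\X(v)$ not associate to $c$; writing $c = p\,\X(v)$ with $p$ a non-unit, we see that $c$ has a proper factor $c_1 := c/p$ (namely $\X(v)$, up to a unit) for which a local slice exists. So the set of factors $d$ of $c$ such that $d = \X(w)$ for some local slice $w$ is nonempty, is closed under taking associates, and (by the UFD hypothesis) can be ordered by divisibility; I would pick $c_0$ minimal in this set with respect to divisibility — equivalently, with the fewest prime factors — and let $s_0$ be a local slice with $\X(s_0) = c_0$. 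Then $c_0 \mid c$ by construction, and $s_0$ is minimal: if $v$ is any local slice with $\X(v)\mid \X(s_0) = c_0$, then $\X(v)$ is again a factor of $c$ realized by a local slice, so by minimality of $c_0$ we get $\X(v)$ associate to $c_0$, i.e.\ $\X(v) = \mu\X(s_0)$ with $\mu$ a unit.

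For (ii), assume $\S^{\X}$ is principal, say $\S^{\X} = d\A^{\X}$ for some $d \in \A^{\X}$, and let $s$ be any minimal local slice, $c = \X(s)$. Since $c \in \S^{\X}$ we have $d \mid c$ in $\A^{\X}$, hence in $\A$; write $c = d e$ with $e \in \A^{\X}$. Now $d$ itself lies in $\S^{\X}$, so $d = \X(s')$ for some local slice $s'$. Applying Lemma \ref{reduction_lemma} with the factorization $c = d e$ (so the factor "$p$" is $e$), the existence of the local slice $s'$ with $\X(s') = c/e = d$ forces the ideal $e\A$ to contain an element $s + a$ with $a \in \A^{\X}$; but this is exactly the hypothesis direction that, run the other way through the same lemma, produces a local slice $s_1$ with $\X(s_1) = d$ and $e\,\X(s_1) = \X(s)$ — more directly, $\X(s_1) = d$ divides $\X(s) = c$, so by minimality of $s$, $d = \X(s_1)$ is associate to $\X(s) = c$. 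Hence $e$ is a unit of $\A^{\X}$, and $\S^{\X} = d\A^{\X} = c\A^{\X} = \X(s)\A^{\X}$.

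The one point demanding care — the main obstacle — is the well-foundedness argument in (i): I must be sure that "has the fewest prime factors among factors of $c$ realized by local slices" genuinely selects a minimal element, which uses that $\A$ is a UFD so that the number of prime factors (with multiplicity) of any divisor of $c$ is a well-defined nonnegative integer bounded by that of $c$. A secondary subtlety in (ii) is bookkeeping the units: Lemma \ref{reduction_lemma} is stated for a single factorization $c = p c_1$, so one should apply it cleanly with $p = e$ and track that "$\mu$ is a unit of $\A$" combined with factorial closedness of $\A^{\X}$ (so a unit of $\A$ lying in $\A^{\X}$ is a unit of $\A^{\X}$) to conclude $e \in (\A^{\X})^{\star}$. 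Both points are routine given the hypotheses, so the proof is short once the induction in (i) is set up correctly.
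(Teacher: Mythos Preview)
Your proof is correct and follows essentially the same strategy as the paper. For (i), the paper phrases it as an explicit induction on the total number of prime factors of $\X(s)$ (using Lemma \ref{reduction_lemma} to peel off one prime at a time), while you use the equivalent well-ordering formulation; both rely on the UFD hypothesis in the same way. For (ii), your detour through Lemma \ref{reduction_lemma} is unnecessary: once you have a local slice $s'$ with $\X(s') = d \mid \X(s)$, the definition of minimality of $s$ immediately gives that $d$ and $\X(s)$ are associates---which is exactly what you observe in your ``more directly'' clause, and is precisely how the paper argues. Your remark about units (that a unit of $\A$ automatically lies in $\A^{\X}$, hence is a unit there) is the right justification for the final step and is implicit in the paper via the earlier observation $\A^{\star}\subset\A^{\X}$.
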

\begin{proof}$i)$ Let $s$ be a local slice of $\X$ and write
$\X(s)=\mu p_1^{m_1}\cdots p_r^{m_r}$, where $\mu$ is a unit and
the $p_i$'s are primes, and set $m=\sum_im_i$. We will prove the
result by induction on $m$.

For $m=0$ we have $\X(s)=\mu$, and so $\mu^{-1}s$ is a slice of
$\X$. This shows that $s$ is a minimal local slice of $\X$. Let us
now assume the result to hold for $m-1$ and let $s$ be a local
slice of $\X$, with $\X(s)=\mu p_1^{m_1}\cdots p_r^{m_r}$ and
$\sum_im_i=m$. Then we have one of the following cases.

- For any $i=1,\ldots,r$ the ideal $p_i\A$ does not contain any
element of the form $s+a$ with $\X(a)=0$. In this case $s$ is a
minimal local slice of $\X$ by lemma \ref{reduction_lemma}.

- There exists $i$ such that $p_i\A$ contains an element of the
form $s+a$, with $\X(a)=0$. Without loss of generality we may
assume that $i=1$. If we write $s+a=p_1s_1$ then
$\X(s_1)=p_1^{m_1-1}p_2^{m_2}\cdots p_r^{m_r}$, and by using
induction hypothesis we get a minimal local slice $s_0$ of $\X$
such that $\X(s_0)\;\vert\; \X(s_1)$. Since $\X(s_1)\;\vert
\;\X(s)$ we get the result in this case.

$ii)$ Assume now that $\S^{\X}$ is principal and let $c$ be a
generator of this ideal, with $c=\X(s_0)$. Let $s$ be a minimal
local slice of $\X$. Since $\X(s)\in \S^{\X}$ we may write
$\X(s)=\mu\X(s_0)$. The fact that $s$ is minimal implies that
$\mu$ is a unit of $\A^{\X}$, and so $\X(s)$ generates $\S^{\X}$.
\end{proof}

The main question to be addressed, if we want to have an
algorithmic version of proposition \ref{local_slice}, is to check,
for a given prime $p$ of $\A$, whether $p\A\cap \A^{\X}[s]$
contains a monic polynomial of degree $1$ with respect to $s$. In
case $\A$ is an affine ring over a computable field $\K$ this
problem may be solved by using Gr\"obner bases theory, see e.g.,
\cite{adams-loustaunau94a,becker-weispfenning93a,cox-little-oshea98a}.
We only deal here with the case where $\A$ and $\A^{\X}$ are
polynomial rings over a field since this fits our need.

\begin{proposition}\label{grobner_elimination}Let $\ci$ be an ideal of
$\K[x_1,\ldots,x_n]=\K[\underline{x}]$ and
$\underline{h}=h_1,\ldots, h_t$ be a list of algebraically
independent polynomials of $\K[\underline{x}]$. Let
$\underline{u}=u_1,\ldots,u_t$ be a list of new variables and
$\cj$ be the ideal of $\K[\underline{u},\underline{x}]$ generated
by $\I$ and $h_1-u_1,\ldots, h_t-u_t$. Let $G$ be a Gr\"obner
basis of $\cj$ with respect to the lexicographic order $u_1\prec
\cdots \prec u_t\prec x_1\prec \cdots \prec x_n$, and
$\{g_1,\ldots, g_v\}=G\cap \K[\underline{u}]$. Then:

i)  $\{g_1,\ldots,g_v\}$ is a Gr\"obner basis of $\cj\cap
\K[\underline{u}]$ with respect to the lexicographic order
$u_1\prec \cdots \prec u_t$,

ii) the $\K$-isomorphism $u_i\in \K[\underline{u}]\longmapsto
h_i\in \K[\underline{h}]$ maps $\cj\cap\K[\underline{u}]$ onto
$\ci\cap\K[\underline{h}]$.
\end{proposition}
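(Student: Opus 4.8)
The plan is to prove the two claims of Proposition~\ref{grobner_elimination} as a slightly enriched version of the classical elimination theorem for Gr\"obner bases, the enrichment being the identification in part ii). Throughout, write $R=\K[\underline{x}]$, $S=\K[\underline{u},\underline{x}]$ and let $\varphi\colon S\to R$ be the $\K$-algebra homomorphism fixing the $x_j$'s and sending $u_i\mapsto h_i$. Since $\varphi$ is surjective with $\ker\varphi=(h_1-u_1,\ldots,h_t-u_t)$ (this is the standard fact that $\K[\underline{x},\underline{u}]/(h_i-u_i)\cong\K[\underline{x}]$ via $u_i\mapsto h_i$, because the $h_i-u_i$ form a triangular system in the $u_i$), we have $\cj=\varphi^{-1}(\I)$, hence $\varphi(\cj)=\I$.

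First I would dispatch part i). The lexicographic order with $u_1\prec\cdots\prec u_t\prec x_1\prec\cdots\prec x_n$ is an elimination order for the variables $x_1,\ldots,x_n$: any monomial involving some $x_j$ is larger than any monomial in the $u_i$'s alone. Therefore the Elimination Theorem (see \cite{cox-little-oshea98a,becker-weispfenning93a}) applies and yields that $G\cap\K[\underline{u}]$ is a Gr\"obner basis of $\cj\cap\K[\underline{u}]$ with respect to the induced order $u_1\prec\cdots\prec u_t$ on $\K[\underline{u}]$. This is essentially a citation; the only thing to check is that the stated order does eliminate the $\underline{x}$, which is immediate from the variable ordering.

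Next, part ii). Restricting $\varphi$ to $\K[\underline{u}]$ gives an \emph{injective} $\K$-algebra homomorphism $\psi\colon\K[\underline{u}]\to R$ with image $\K[\underline{h}]$; injectivity is exactly the algebraic independence of $h_1,\ldots,h_t$, which is hypothesized. So $\psi$ is a $\K$-isomorphism $\K[\underline{u}]\xrightarrow{\sim}\K[\underline{h}]$, and I must show $\psi(\cj\cap\K[\underline{u}])=\I\cap\K[\underline{h}]$. For ``$\subseteq$'': if $g\in\cj\cap\K[\underline{u}]$ then $\psi(g)=\varphi(g)\in\varphi(\cj)=\I$, and $\psi(g)\in\K[\underline{h}]$ by construction, so $\psi(g)\in\I\cap\K[\underline{h}]$. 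For ``$\supseteq$'': take $f\in\I\cap\K[\underline{h}]$; write $f=P(h_1,\ldots,h_t)$ for some $P\in\K[\underline{u}]$, so $f=\psi(P)=\varphi(P)$. Then $\varphi(P)=f\in\I$ means $P\in\varphi^{-1}(\I)=\cj$, hence $P\in\cj\cap\K[\underline{u}]$ and $f=\psi(P)\in\psi(\cj\cap\K[\underline{u}])$. This proves equality.

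The main obstacle is not any single deep step but rather being careful about the two roles $\varphi$ plays: as a map on all of $S$ it realizes $\cj$ as the full preimage of $\I$, while its restriction $\psi$ to $\K[\underline{u}]$ is the isomorphism onto $\K[\underline{h}]$ appearing in the statement. Once one records $\cj=\varphi^{-1}(\I)$ and the injectivity of $\psi$ (from algebraic independence), both inclusions in ii) are short diagram-chases, and i) is the off-the-shelf Elimination Theorem. One should also note in passing that $\cj\cap\K[\underline{u}]=\varphi^{-1}(\I)\cap\K[\underline{u}]=\psi^{-1}(\I\cap\K[\underline{h}])$, which is the clean way to see ii) and makes transparent that the two ideals correspond under the coordinate change $u_i\leftrightarrow h_i$.
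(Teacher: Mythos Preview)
Your argument is correct. Part i) is indeed the standard Elimination Theorem, and your treatment of part ii) via the substitution map $\varphi\colon S\to R$, the identification $\cj=\varphi^{-1}(\I)$, and the restriction $\psi=\varphi|_{\K[\underline{u}]}$ is clean and complete; the two inclusions are checked correctly.

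As for comparison with the paper: the paper states this proposition without proof, treating it as a known fact from Gr\"obner-basis theory (with references to \cite{adams-loustaunau94a,becker-weispfenning93a,cox-little-oshea98a}). So there is no paper proof to compare against; your write-up simply supplies the missing verification, and the approach you take---elimination theorem plus the graph-ideal description $\cj=\varphi^{-1}(\I)$---is the standard one underlying those references.
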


In our case, we have $\ci=p\K[x,y,z]$ for some polynomial $p$, and
$\K[\underline{h}]=\K[f,g,s]$ where $f,g$ is a generating system
of $\K[x,y,z]^{\X}$ and $s$ is a local slice of $\X$. Let
$u_1,u_2,u_3$ be new variables and $\cj$ be the ideal of
$\K[u_1,u_2,u_3,x,y,z]$ generated by $p,f-u_1,g-u_2,s-u_3$. Let
$G$ be a Gr\"obner basis of $\cj$ with respect to the
lexicographic order $u_1\prec u_2\prec u_3\prec x\prec y\prec z$
and $G_1=G\cap \K[u_1,u_2,u_3]$. According to proposition
\ref{grobner_elimination}, the ideal $p\K[x,y,z]\cap \K[f,g,s]$
contains a polynomial of the form $s+a(f,g)$ if and only if $G_1$
contains a monic polynomial $\ell(u_1,u_2,u_3)$ of degree $1$ with
respect to $u_3$. In this case the polynomial we are looking for
is $\ell(f,g,s)$.

The following algorithm gives the main steps to be performed for
computing a minimal local slice of a given locally nilpotent
derivation in dimension $3$.

\begin{algorithm}[ht]
\caption{: Minimal local slice algorithm.}
\medskip\noindent
\begin{algorithmic}[1]
\label{mini_local_slice} \REQUIRE A locally nilpotent
$\K$-derivation $\X$ of
    $\K[x,y,z]$ and a generating system $f,g$ of $\K[x,y,z]^{\X}$.

\ENSURE A minimal local slice $s$ of $\X$.

\mni\STATE Compute a local slice $s_0$ of $\X$.

\STATE Write $\X(s_0)=p_1^{m_1}\cdots p_r^{m_r}$, where the
$p_i$'s are primes.

\STATE $s:=s_0$.

\FOR {$i$ from $1$ to $r$}

\FOR{$j$ from $1$ to $m_i$}

\STATE Let $G$ be a Gr\"obner basis of
$\ci(p_i,f-u_1,g-u_2,s-u_3)$ with respect to the lex-order
$u_1\prec u_2\prec u_3\prec x\prec y\prec z$, and let $G_1=G\cap
\K[u_1,u_2,u_3]$.

\IF {$G_1$ contains a monic polynomial of degree $1$ with respect
to $u_3$, say $u_3+a(u_1,u_2)$}

\STATE Write $s+a(f,g)=p_is_1$.

\STATE $s:=s_1$.

\ELSE \STATE Break.

\ENDIF

\ENDFOR

\ENDFOR
\end{algorithmic}
\end{algorithm}

\section{Computing the rank in dimension three}
\label{sec:rank}An irreducible locally nilpotent $\K$-derivation
$\X$ of $\K[x_1,\ldots,x_n]$ is of rank $1$ if and only if
$\K[x_1,\ldots,x_n]^{\X}=\K^{[n-1]}$ and $\X$ has a slice, see
\cite{freudenburg95a}. In dimension $3$, and taking into account
theorem \ref{miyanishi}, an irreducible locally nilpotent
derivation is of rank one if and only if Algorithm
\ref{mini_local_slice} produces a slice. Therefore, we only need
to characterize derivations of rank two.

\begin{theorem}\label{rank_two_theorem}Let $\X$ be an irreducible
locally nilpotent derivation of $\K[x,y,z]$ and assume that
$rank(\X)\neq 1$. Let us write $\K[x,y,z]^{\X}=\K[f,g]$ and
$\S^{\X}=c\K[f,g]$. Then the following are equivalent:

i) $rank(\X)=2$,

ii) $c=\ell(u)$, where $\ell$ is a univariate polynomial and $u$
is a coordinate of $\K[f,g]$,

iii) $c=\ell(u)$, where $u$ is a coordinate of $\K[x,y,z]$.
\end{theorem}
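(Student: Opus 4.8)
The plan is to establish the cycle of implications $(i)\Rightarrow(ii)\Rightarrow(iii)\Rightarrow(i)$, using throughout the following reformulation of $(i)$: since $\X\neq 0$ one has $rank(\X)\geq 1$, so, as $rank(\X)\neq 1$ by hypothesis, $(i)$ holds if and only if $corank(\X)\geq 1$, that is, if and only if the ring of constants $\K[f,g]=\K[x,y,z]^{\X}$ contains a coordinate of $\K[x,y,z]$. The implication $(iii)\Rightarrow(i)$ is then the quickest. One may assume $\ell$ non-constant, since otherwise $c$ would be a unit, $\X$ would have a slice, and $rank(\X)=1$ by Lemma~\ref{wright}, against the hypothesis. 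Writing $\ell=a\prod_i\ell_i^{m_i}$ with the $\ell_i$ irreducible over $\K$ and using that $u$ is a coordinate of $\K[x,y,z]$, each $\K[x,y,z]/(\ell_i(u))$ is a polynomial ring over the field $\K[t]/(\ell_i)$, hence a domain, so each $\ell_i(u)$ is prime in $\K[x,y,z]$. From $\ell(u)=c\in\S^{\X}\subseteq\K[x,y,z]^{\X}$ and the factorial closedness of $\K[x,y,z]^{\X}$ I conclude $\ell_i(u)\in\K[x,y,z]^{\X}$ for each $i$, and differentiating (characteristic zero) gives $\X(u)=0$. Thus $u\in\K[f,g]$ is a coordinate of $\K[x,y,z]$ lying in the ring of constants, so $corank(\X)\geq 1$ and hence $rank(\X)=2$.

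For $(i)\Rightarrow(ii)$ I would pick a coordinate $w\in\K[f,g]$ of $\K[x,y,z]$, so that $\K[x,y,z]=\K[w][y_1,z_1]$ and $\X$ is a locally nilpotent $\K[w]$-derivation of $\K[w]^{[2]}$. By the UFD version of Rentschler's theorem recalled after Theorem~\ref{rentshcler}, $\K[x,y,z]^{\X}=\K[w][p]$ for some $p$; hence $\K[f,g]=\K[w,p]$, so $\{w,p\}$ is a coordinate system of $\K[f,g]$ — in particular $w$ is a coordinate of $\K[f,g]$ and $p$ is transcendental over $\K(w)$. Extending scalars to $\K(w)$ and invoking Theorem~\ref{rentshcler} again, $p$ is a coordinate of $\K(w)[y_1,z_1]$, say $\K(w)[y_1,z_1]=\K(w)[p,G]$; then the extension $\bar\X$ of $\X$ sends $G$ to $\pm\jac_{(y_1,z_1)}(p,G)$, a nonzero element of $\K(w)$ because $\{p,G\}$ is a coordinate system, so $\sigma:=G/\bar\X(G)$ is a slice of $\bar\X$. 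Clearing denominators, $s:=e(w)\sigma\in\K[w][y_1,z_1]$ for a suitable $e(w)\in\K[w]\setminus\{0\}$, and $\X(s)=e(w)\in\K[w]\subseteq\K[x,y,z]^{\X}$, so $s$ is a local slice of $\X$ with $\X(s)\in\K[w]$. Proposition~\ref{local_slice}(i) then yields a minimal local slice $s_0$ with $\X(s_0)\mid\X(s)$; since $\K[x,y,z]^{\X}$ is factorially closed in $\K[x,y,z]$ the quotient lies in $\K[w,p]$ as well, and a divisor in $\K[w,p]$ of an element of $\K[w]$ lies in $\K[w]$ for degree-in-$p$ reasons, so $\X(s_0)\in\K[w]$. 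By Theorem~\ref{bonnet} and Proposition~\ref{local_slice}(ii), $c:=\X(s_0)$ generates $\S^{\X}$; writing $c=\ell(w)$ with $\ell$ univariate gives $(ii)$ with $u=w$.

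For $(ii)\Rightarrow(iii)$ I would write $\K[f,g]=\K[u,v]$, so $\X$ is a $\K[u]$-derivation of $\K[x,y,z]$, and localize at $\K[u]\setminus\{0\}$, obtaining a locally nilpotent derivation $\tilde\X$ of the affine $\K(u)$-domain $R:=\K(u)\otimes_{\K[u]}\K[x,y,z]$ with $R^{\tilde\X}=\K(u)[v]\cong\K(u)^{[1]}$. Taking a minimal local slice $s$ of $\X$, Proposition~\ref{local_slice}(ii) together with $\S^{\X}=c\K[f,g]$ lets me assume $\X(s)=c=\ell(u)$, which becomes a unit of $\K(u)$; hence $s/\ell(u)$ is a slice of $\tilde\X$, and Lemma~\ref{wright} gives $R=\K(u)[v]\bigl[s/\ell(u)\bigr]\cong\K(u)^{[2]}$. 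The version of Kaliman's theorem recalled after Theorem~\ref{kaliman-daigle} then shows that $u$ is a coordinate of $\K[x,y,z]$, which together with $c=\ell(u)$ is exactly $(iii)$.

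I expect $(i)\Rightarrow(ii)$ to be the main obstacle: the subtle point is that $p$ need not be a coordinate of $\K[w][y_1,z_1]$ over $\K[w]$, only over $\K(w)$, so $\X$ itself need not have a slice and one cannot simply read off the plinth generator. What makes the argument work is that one can nonetheless produce a local slice whose image lies in $\K[w]$, after which the minimality and factorial-closedness machinery of Section~\ref{sec:local_slice} forces the plinth generator into $\K[w]$, i.e.\ onto a single coordinate of $\K[f,g]$. By contrast $(ii)\Rightarrow(iii)$ is essentially a reduction, via localization and Lemma~\ref{wright}, to Kaliman's theorem.
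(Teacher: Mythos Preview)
Your argument follows the same cycle $(i)\Rightarrow(ii)\Rightarrow(iii)\Rightarrow(i)$ as the paper and uses the same ingredients at each step (UFD/field Rentschler for $(i)\Rightarrow(ii)$, Lemma~\ref{wright} plus Kaliman's theorem for $(ii)\Rightarrow(iii)$, and the chain rule/factorial closedness for $(iii)\Rightarrow(i)$), so the approach is essentially that of the paper.

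One step needs tightening. In $(i)\Rightarrow(ii)$ you assert that $\bar\X(G)=\pm\jac_{(y_1,z_1)}(p,G)$, but the UFD form of Rentschler only gives $\X=h(p)\,\jac_{(y_1,z_1)}(p,\cdot)$ for some $h\in\K[w][p]$, so a priori $\bar\X(G)=h(p)\jac_{(y_1,z_1)}(p,G)$. What forces $h\in\K^{\star}$ is precisely the irreducibility of $\X$, which you never invoke in this implication: since $h(p)$ divides $\X(x),\X(y),\X(z)$ in $\K[x,y,z]$, irreducibility makes $h(p)$ a unit. The paper is equally terse here (``Since $\X$ is irreducible\ldots there exists $s=h/k(u)$ with $\X(s)=1$''), but it does name the hypothesis being used. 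Once you have a local slice with image in $\K[w]$, the paper finishes more directly than you do: from $k(u)\in\S^{\X}$ one gets $c\mid k(u)$ in $\K[u,p]$, and factorial closedness of $\K[u]$ in $\K[x,y,z]$ immediately gives $c\in\K[u]$, without passing through Proposition~\ref{local_slice}. Your detour through minimal local slices is correct but unnecessary. Likewise, for $(iii)\Rightarrow(i)$ the paper simply differentiates $c=\ell(u)$ once; your factorisation argument is valid but longer.
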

\begin{proof}$i)\Rightarrow ii)$ Assume that $rank(\X)=2$ and
let $u,v,w$ be a coordinate system such that $\X(u)=0$. The
$\K$-derivation $\X$ is therefore a $\K[u]$-derivation of
$\K[u][v,w]$, and since $\K[u]$ is a UFD there exists $p\in
\K[x,y,z]$ such that $\K[f,g]=\K[u,p]$. This proves that $u$ is a
coordinate of $\K[f,g]$. Let us now view $\X$ as
$\K(u)$-derivation of $\K(u)[v,w]$. Since $\X$ is irreducible and
according to theorem \ref{rentshcler} there exists
$s=\frac{h(u,v,w)}{k(u)}$ such that $\X(s)=1$, and so
$\X(h)=k(u)$. Let $c$ be a generator of $\S^{\X}$. Then
$c\;\vert\; k(u)$, and since $\K[u]$ is factorially closed in
$\K[u,v,w]$ we have $c=\ell(u)$ for some univariate polynomial
$\ell$.

$ii)\Rightarrow iii)$ Assume that $c=\ell(u)$, where $u$ is a
coordinate of $\K[f,g]$ and write $\K[f,g]=\K[u,p]$. Let $s$ be
such that $\mathcal{X}(s)=c$. If we view $\X$ as
$\K(u)$-derivation of $\K(u)[x,y,z]$ then
$\K(u)[x,y,z]^{\X}=\K(u)[p]$ and $\X(c^{-1}s)=1$. By applying
lemma \ref{wright} we get $\K(u)[x,y,z]=\K(u)[p,s]$. From theorem
\ref{kaliman-daigle} we deduce that $u$ is a coordinate of
$\K[x,y,z]$.

$iii)\Rightarrow i)$ We have $\X(c)=\ell^{\prime}(u)\X(u)=0$, and
so $\X(u)=0$. On the other hand, since $u$ is assumed to be a
coordinate of $\K[x,y,z]$ we have $rank(\X)\leq 2$. By assumption
we have $rank(\X)\neq 1$ and so $rank(\X)=2$.
\end{proof}

The conditions of $ii)$ in theorem \ref{rank_two_theorem} are in
fact algorithmic. Indeed, it is algorithmically possible to check
whether a given polynomial in two variables is a coordinate, see
e.g.
\cite{abhyankar-moh75a,berson-essen2000a,elkahoui2004c,shpilrain97a}.
We will use the algorithm given in \cite{elkahoui2004c}, but it is
worth mentioning that from the complexity point of view the
algorithm given in \cite{shpilrain97a} is the most efficient as
reported in \cite{shpilrain2005a}. On the other hand, condition
$c=\ell(u)$ may be checked by using a special case, called {\it
uni-multivariate decomposition}, of functional decomposition of
polynomials, see e.g., \cite{gathen90}. It is important to notice
here that uni-multivariate decomposition is essentially unique.
Namely, if $c=\ell(u)=\ell_1(u_1)$, where $u$ and $u_1$, are
undecomposable, then there exist $\mu\in \K^{\star}$ and $\nu\in
\K$ such that $u_1=\mu u+\nu$. Due to the particular nature of our
decomposition problem it seems more convenient to use the
following proposition.

\begin{proposition}\label{decomposition_lemma}Let $c(\underline{x})\in
\K[x_1,\ldots,x_n]$ be nonconstant and
$\underline{u}=u_1,\ldots,u_n$ be a list of new variables. Then
the following are equivalent:

i) $c(\underline{x})=\ell(y_1(\underline{x}))$, where $\ell$ is a
univariate polynomial and $y_1$ is a coordinate of
$\K[x_1,\ldots,x_n]$,

ii) $y_1(\underline{x})-y_1(\underline{u})\;\vert\;
c(\underline{x})-c(\underline{u})$ and $y_1$ is a coordinate of
$\K[x_1,\ldots,x_n]$.
\end{proposition}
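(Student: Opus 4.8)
The plan is to prove the two implications separately, with the substance concentrated in $ii)\Rightarrow i)$. For $i)\Rightarrow ii)$, I would simply substitute: if $c(\underline{x})=\ell(y_1(\underline{x}))$, then $c(\underline{x})-c(\underline{u})=\ell(y_1(\underline{x}))-\ell(y_1(\underline{u}))$, and every univariate polynomial $\ell(t)-\ell(t')$ is divisible by $t-t'$ in $\K[t,t']$; specializing $t=y_1(\underline{x})$, $t'=y_1(\underline{u})$ gives the divisibility $y_1(\underline{x})-y_1(\underline{u})\mid c(\underline{x})-c(\underline{u})$ in $\K[\underline{u},\underline{x}]$. The coordinate hypothesis on $y_1$ is carried over verbatim, so this direction is immediate.

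For $ii)\Rightarrow i)$, the idea is to perform a change of coordinates that turns $y_1$ into one of the variables and then read off $\ell$ as a quotient. Since $y_1$ is a coordinate of $\K[x_1,\ldots,x_n]$, fix a coordinate system $y_1,y_2,\ldots,y_n$ and the induced $\K$-automorphism $\phi$ of $\K[\underline{x}]$ with $\phi(x_1)=y_1$, and correspondingly view $c$ as a polynomial $\tilde c(y_1,\ldots,y_n)$ in the new coordinates. The divisibility hypothesis, transported through the automorphism $\phi\otimes\phi$ of $\K[\underline{u},\underline{x}]\cong\K[\underline{v},\underline{y}]$ (with $v_i$ the images of $u_i$), becomes $y_1-v_1 \mid \tilde c(\underline{y})-\tilde c(\underline{v})$ in $\K[\underline{v},\underline{y}]$. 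Now I would argue that this forces $\tilde c$ to depend only on $y_1$: setting $y_1=v_1$ in $\tilde c(\underline{y})-\tilde c(\underline{v})$ must give zero, i.e. $\tilde c(v_1,y_2,\ldots,y_n)=\tilde c(v_1,v_2,\ldots,v_n)$ identically in $y_2,\ldots,y_n,v_2,\ldots,v_n$; hence $\tilde c$ is constant in the variables $y_2,\ldots,y_n$, so $\tilde c=\ell(y_1)$ for a univariate $\ell$. Pulling back, $c(\underline{x})=\ell(y_1(\underline{x}))$, which is exactly $i)$.

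The only delicate point is making the substitution argument rigorous: one has to check that "setting $y_1=v_1$" is a legitimate $\K$-algebra homomorphism on the relevant polynomial ring and that the divisibility indeed implies the image of $\tilde c(\underline{y})-\tilde c(\underline{v})$ under this homomorphism is zero. This is routine — the quotient ring $\K[\underline{v},\underline{y}]/(y_1-v_1)$ is again a polynomial ring (in $v_1,y_2,\ldots,y_n,v_2,\ldots,v_n$), divisibility by $y_1-v_1$ means the class is zero there, and the class of $\tilde c(\underline{y})-\tilde c(\underline{v})$ is precisely $\tilde c(v_1,y_2,\ldots,y_n)-\tilde c(v_1,v_2,\ldots,v_n)$. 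So the main obstacle is essentially bookkeeping with the coordinate change rather than any real difficulty; once the problem is moved into the coordinate system in which $y_1$ is a variable, the statement is transparent.
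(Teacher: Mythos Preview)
Your proposal is correct and follows essentially the same route as the paper: both complete $y_1$ to a coordinate system, transport the divisibility to the new coordinates (the paper sets $v_i=y_i(\underline{u})$ directly rather than invoking an abstract $\phi\otimes\phi$), and then specialize $v_1=y_1$ to force the expression of $c$ in the new coordinates to depend only on $y_1$. The only cosmetic difference is that the paper spells out the last step by expanding $\ell(\underline{y})=\sum_\alpha a_\alpha(y_1)y_2^{\alpha_2}\cdots y_n^{\alpha_n}$ and invoking algebraic independence of $v_2,\ldots,v_n,y_2,\ldots,y_n$ over $\K[y_1]$, whereas you phrase the same conclusion via the quotient by $(y_1-v_1)$.
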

\begin{proof}$i)\Rightarrow ii)$ Let $\ell$ be a univariate polynomial
and $t,t^{\prime}$ be new variables. Then we have
$t-t^{\prime}\;\vert\; \ell(t)-\ell(t^{\prime})$. This shows that
$y_1(\underline{x})-y_1(\underline{u})\;\vert\;
\ell(y(\underline{x}))-\ell(y(\underline{u}))$.

$ii)\Rightarrow i)$ Let $y_2,\ldots,y_n$ be polynomials such that
$\underline{y}=y_1,\ldots,y_n$ is a coordinate system of
$\K[\underline{x}]$, and let $v_i=y_i(\underline{u})$. Then
$\underline{v}=v_1,\ldots,v_n$ is a coordinate system of
$\K[\underline{u}]$. Let us write
$c(\underline{x})-c(\underline{u})=(y_1(\underline{x})-
y_1(\underline{u}))A(\underline{u},\underline{x})$ and
$c(\underline{x})=\ell(\underline{y})$. Then we have
\begin{equation}\label{division_equation}\ell(\underline{y})-
\ell(\underline{v})=(y_1-v_1)B(\underline{v},\underline{y}).
\end{equation}
Let us now write
$\ell(\underline{y})=\sum_{\alpha}a_{\alpha}(y_1)y_2^{\alpha_2}
\cdots y_n^{\alpha_n}$, where $\alpha=(\alpha_2,\ldots,\alpha_n)$.
After substituting $y_1$ to $v_1$ in the relation
(\ref{division_equation}) we get
$$\sum_{\alpha}a_{\alpha}(y_1)y_2^{\alpha_2}\cdots y_n^{\alpha_n}
-\sum_{\alpha}a_{\alpha}(y_1)v_2^{\alpha_2}\cdots
v_n^{\alpha_n}=0.$$ Taking into account the fact that
$v_2,\ldots,v_n,y_2,\ldots,y_n$ are algebraically independent over
$\K[y_1]$ we get $a_{\alpha}=0$ for any $\alpha\neq 0$. This
proves that $\ell(\underline{y})$ is a polynomial in terms of
$y_1$.
\end{proof}

\begin{algorithm}[ht]
\caption{: Rank algorithm.}
\medskip\noindent
\begin{algorithmic}[1]\label{rank_algorithm}
\REQUIRE A locally nilpotent $\K$-derivation $\X$ of
    $\K[x,y,z]$ and a generating system $f,g$ of $\K[x,y,z]^{\X}$.

\ENSURE The rank of $\X$.

\mni \STATE Write
$\X=a_1{\partial}_{x}+a_2{\partial}_y+a_3{\partial}_z$. Let
$c_1=\gcd(a_1,a_2,a_3)$ and $\X=c_1\Y$.

\STATE By Algorithm \ref{mini_local_slice}, compute a minimal
local slice $s$ of $\Y$ and let $c=\Y(s)$.

\IF{$c$ is a unit}

\STATE $rank(\X)=1$.

\ELSE

\STATE Compute a factorization of $c(f,g)-c(t_1,t_2)$ in
$\K[f,g,t_1,t_2]$.

\IF{no factor of $c(f,g)-c(t_1,t_2)$ is of the form
$u(f,g)-u(t_1,t_2)$}

\STATE $rank(\X)=3$.

\ELSE

\IF{$u$ is a coordinate of $\K[f,g]$ for a factor of the form
$u(f,g)-u(t_1,t_2)$ of $c(f,g)-c(t_1,t_2)$}

\STATE $rank(\X)=2$.

\ELSE

\STATE $rank(\X)=3$.

\ENDIF

\ENDIF

\ENDIF
\end{algorithmic}
\end{algorithm}

\mni As defined in \cite{daigle96}, a derivation $\X$ of rank $r$
of $\K[\underline{x}]$ is rigid if for any coordinate systems
$y_1,\ldots,y_n$ and $z_1,\ldots,z_n$ such that
$\K[y_1,\ldots,y_{n-r}]\subseteq \K[\underline{x}]^{\X}$ and
$\K[z_1,\ldots,z_{n-r}]\subseteq \K[\underline{x}]^{\X}$ we have
$\K[y_1,\ldots,y_{n-r}]=\K[z_1,\ldots,z_{n-r}]$. The main results
of \cite{daigle96} lie behind the fact that locally nilpotent
derivations in dimension $3$ are rigid. In fact only the rank two
case is nontrivial since in general derivations of rank $0, 1$ and
$n$ are obviously rigid. The characterization $ii)$ of rank two
derivations given in theorem \ref{rank_two_theorem} gives a more
precise information. Indeed, it tells that if a coordinate of
$\K[x,y,z]$ belongs to $\K[x,y,z]^{\X}$ then it may be found by
decomposing the generator of the plinth ideal $\S^{\X}$. The fact
that rank two derivations are rigid is then an obvious consequence
of the uniqueness property of uni-multivariate decomposition.

\mni In case a given derivation $\X$ is of rank $2$, Algorithm
\ref{rank_algorithm} does not produce a coordinate system $u,v,w$
such that $\X(u)=0$. Computing such a coordinate system, which is
equivalent to obtaining an algorithmic version of theorem
\ref{kaliman-daigle}, is a necessary step towards our ultimate
goal, namely algorithmically classifying locally nilpotent
derivations in dimension $3$.

\section{Comments on implementation}\label{sec:comments}
Before implementing algorithms for locally nilpotent derivations
of $\K[x,y,z]$ we must first specify how such objects are to be
concretely represented. Any chosen representation should address
the two following problems.
\begin{enumerate}
    \item {\it Recognition problem:} Given a $\K$-derivation $\X$ of
    $\K[x,y,z]$, check whether $\X$ is locally nilpotent.
    \item {\it Kernel problem:} Given a locally nilpotent
    $\K$-derivation $\X$ of $\K[x,y,z]$, compute $f,g$ such that
    $\K[x,y,z]^{\X}=\K[f,g]$.
\end{enumerate}

Usually, a derivation $\X$ of $\K[x,y,z]$ is written as a
$\K[x,y,z]$-linear combination of the partial derivatives
$\partial_{x},\partial_y,\partial_z$. However, with such a
representation, the recognition and kernel problems are nowhere
near completely solved. To our knowledge, only the weighted
homogeneous case of the recognition problem is solved, see
\cite{essen2003a}. One way to go round this hurdle is to opt for
another representation.

The Jacobian representation gives another alternative to represent
locally nilpotent derivations. Indeed, any irreducible locally
nilpotent $\K$-derivation of $\K[x,y,z]$ is, up to a nonzero
constant in $\K$, equal to $\jac(f,g,.)$, see \cite{daigle97b}. In
order to check whether a Jacobian derivation $\X=\jac(f,g,.)$ is
locally nilpotent it suffices to check that
$\X^{d+1}(x)=\X^{d+1}(y)=\X^{d+1}(z)=0$, where $d=
\deg(f)\deg(g)$, see \cite{essen_book}. However, it is still not
clear how such a representation could help in solving the kernel
problem. Nevertheless, we may always check whether this ring of
constants is generated over $\K$ by $f,g$ by using van den Essen's
kernel algorithm \cite{essen_book}.

Due to the above discussed issues, we have restricted our
implementation to the case of derivations of $\K[x,y,z]$
represented in a Jacobian form, say $\jac(f,g,.)$, and whose ring
of constants is generated by $f,g$. The computer Algebra system we
used for implementation is \Maple 9.

%\bibliographystyle{plain}
%\bibliography{three_triangular}

\end{document}